\def\BibTeX{{\rm B\kern-.05em{\sc i\kern-.025em b}\kern-.08em
    T\kern-.1667em\lower.7ex\hbox{E}\kern-.125emX}}
\newtheorem{assumption}{Assumption}
\newtheorem{definition}{Definition}
\newtheorem{lemma}{Lemma}
\newtheorem{theorem}{Theorem}
\newtheorem{remark}{Remark}
\title{Networked Control and Mean Field Problems Under Diagonal Dominance: Decentralized and Social Optimality}
\author{Vivek Khatana$^{\dagger,\ddagger,\ast}$, Duo Wang$^{\dagger,\ddagger,\ast}$, Petros Voulgaris$^{\dagger}$, Nicola Elia$^{\star}$,
Naira Hovakimyan$^{\ddagger}$% <-this % stops a space
\thanks{$^{\ast}$ Equal contribution}%
\thanks{$^{\dagger}$ The authors are with the Department of Mechanical Engineering, University of Nevada, Reno, NV-USA 
        {\tt\small \{vkhatana, duow,  pvoulgaris\}@unr.edu}}
\thanks{$^{\star}$ The author is with the Electrical and Computer Engineering Department,
University of Minnesota Twin Cities, MN, USA {\tt\small neli@umn.edu}}
\thanks{$^{\ddagger}$ The authors are with the Department of Mechanical Science and Engineering, Grainger College of Engineering, University of Illinois at Urbana-Champaign, Urbana, IL-USA {\tt\small \{vkhatana, duowang, nhovakim\}@illinois.edu}}
\thanks{This work was supported in part by NSF CMMI Award $2137764$, NASA
under the cooperative agreement $80$NSSC$20$M$0229$ and University Leadership Initiative Grant $80$NSSC$22$M$0070$ and NSF ECPN Award $2311007$.}% <-this % stops a space
}
\begin{document}
\maketitle

\begin{abstract}
In this article, we employ an input-output approach to expand the study of cooperative multi-agent control and optimization problems characterized by mean-field interactions that admit decentralized and selfish solutions. The setting involves $n$ independent agents that interact solely through a shared cost function, which penalizes deviations of each agent from the group’s average collective behavior.
Building on our earlier results established for homogeneous agents, we extend the framework to nonidentical agents and show that, under a diagonal dominant interaction of the collective dynamics, with bounded local open-loop dynamics, the optimal controller for ${\cal H}_\infty$ and ${\cal H}_2$ norm minimization remain decentralized and selfish in the limit as the number of agents $n$ grows to infinity. 
\end{abstract}

\begin{IEEEkeywords}
Decentralized control, Mean Field games, infinite-dimensional systems, optimal control, robust control. 
\end{IEEEkeywords}

\section{Introduction}
The study of networked control systems is an active area of research in recent years, with researchers addressing questions pertaining to various aspects such as structural properties like controllability/observability~\cite{liu2011controllability, rahmani2009controllability, pasqualetti2014controllability}, 
performance, or noise and uncertainty amplification~\cite{bamieh2012coherence, siami2013fundamental},
distributed controller design~\cite{bamieh2002distributed,bamieh2005convex, vamsi_elia_tac,lall_rotkowitz} among others. In this paper, we focus on the design of optimal controllers for a collection of $n$ decoupled multi-agent systems that are coupled through a shared social cost. This cost is defined as an input-output performance metric of the network, quantified by the deviation of each agent’s output from the collective average output. Additionally, an extra term accounts for individual control effort, which we incorporate as a constraint in part of the analysis. The overall formulation is closely related to frameworks considered in Mean Field (MF) games~\cite{caines_lqg0, huang_lqg, emilio, caines_lqg, Lions, nourian, basar, astolfi}.

Generally, in MF formulations (see, e.g., \cite{caines_lqg}), decentralized strategies are typically derived by substituting the true average measurements with a deterministic signal that characterizes the aggregate behavior. Under suitable assumptions, this approximation converges, asymptotically to the expected value of the average measurement signals and can be computed locally by each agent. Computation of the MF term generally requires solving a set of differential equations in advance, which may be challenging and computationally demanding~\cite{carmona}. Furthermore, most MF game methodologies adopt state-space, stochastic models, and many are formulated with finite-horizon cost criteria.

Although various input-output methodologies for distributed control have been proposed (e.g., \cite{voulgaris,qisalvoukha04,lall_rotkowitz,vamsi_elia_tac,cedric_andrea}), their scalability and practical realizability for a large number of agents are not straightforward in the problem settings considered in these works. In contrast, the MF formulation that we consider has a particular structure that enables explicit solutions that are both scalable and stably implementable.

In this article, we present a unified and systematic framework that both broadens and deepens our earlier contributions to networked control~\cite{voueli17journal, ACC18, IFAC17,tac21, voulgaris2025decentralized}. Our most recent work~\cite{voulgaris2025decentralized} advanced prior results on homogeneous agents~\cite{voueli17journal, ACC18, IFAC17,tac21} by establishing that, even in the heterogeneous setting, decentralized and selfish strategies achieve optimal $\mathcal{H}_\infty$ and $\mathcal{H}_2$ norm minimization asymptotically as $n \to \infty$, provided the collective input-output map remains uniformly bounded. By decentralized and selfish, we mean that each agent can disregard its deviation from the collective average—the social coupling—and instead optimize its own locally regulated objectives solely using local information. In other words, for infinite-horizon problems, the MF terms typically required in MF game formulations become unnecessary.

The present article significantly expands these results: we demonstrate that optimality of the decentralized and selfish solutions persists even under a substantially weaker condition—namely, \emph{when the collective input-output dynamics satisfy a diagonal dominance property with only the local input-output maps of agents being bounded}. In this relaxed setting, we prove that $\mathcal{H}_\infty/\mathcal{H}_2$ norm minimization continues to admit optimal decentralized controllers as the number of agents grows. Note that diagonal dominance appears in many large-scale systems. In power networks, local inverter dynamics dominate over weaker couplings through admittances~\cite{dorfler2013synchronization, simpson2013synchronization}, in traffic and platooning, vehicle actuation outweighs spacing and alignment effects~\cite{seiler2004disturbance, bamieh2012coherence}, in multi-robot swarms, stabilization is primarily local with only mild aggregate coupling~\cite{jadbabaie2003coordination,ADMM_tac}, and in social networks, agents are driven mainly by individual preferences with interactions mediated through aggregate quantities~\cite{basar1999dynamic, huang2006large}. These examples highlight that diagonal dominance captures a broad class of practical systems where decentralized strategies are desired.

\subsection{Notations And Definitions}
For a real sequence $M=\{M_k\}_{k=0}^\infty$ we use the $\ell_2$ or ${\cal H}_2$ norm $\left\|M\right\|_2:=[\sum_k M_k^2]^{1/2}$.  For a real sequence of matrices $M=[M_{ij}]=\{M_k\}_{k=0}^\infty$ we use the $\ell_2$ or ${\cal H}_2$ norm $\left\|M\right\|_2:=[\sum_{i,j} \left\|M_{ij}\right\|_{2}^{2}]^{1/2} $.  If $M$ is a transfer function $\left\|M\right\|:=\sup_{\omega}\sigma_{\rm max}[M(e^{\jmath\omega})]$, where $\sigma_{\rm max}$ stands for the maximum singular value and $M(\lambda)=\sum_{k= 0}^{\infty}M_k\lambda^k$ is the $\lambda$-transform; we note that this is the ${{\cal H}_\infty}$ norm of $M$ which is the $\ell_2$-induced norm of the map $M$. $\mathbf{1}$ and $I$ denote a $n$-dimensional vector with all entries equal to $1$ and the $n$-dimensional identity matrix respectively. Denote $\mathbf{T} := \left( I - \frac{1}{n}\mathbf{1}\mathbf{1}^\top \right)$ and $\text{diag}(.)$ is a matrix operator that creates a diagonal matrix created by putting the arguments of the input on the diagonal. 

\begin{definition}
    Let $f$ and $g$ be real-valued functions, both defined on some unbounded subset of the positive real numbers, we say $f$ is little-o of $g$, i.e. $f(x) = o(g(x))$ if for every positive constant $\eta$ there exists a constant $x_0$ such that $|f(x)| \leq \eta g(x)$ for all $x \geq x_0$. Further, we say $f$ is big-O of $g$, i.e., $f = O(g(x))$ if there exists a positive real number $B$ and a real number $x_0$ such that $|f(x)| \leq B |g(x)|$ for all $x \geq x_0$.
\end{definition}

\begin{definition}
    A $n \times n$ matrix $Q$ is called $\alpha$-column diagonal dominant with $\alpha > 0$, if 
    \begin{align}
        \alpha \|Q_{jj}\| \geq  \sum_{i=1, i \neq j}^n \|Q_{ij}\|,
    \end{align}
    for all $j = 1,\dots,n$. 
\end{definition}

\section{Problem Setup}
We consider $n$ dynamically decoupled systems $\{G_i\}_{i=1}^n$ where each $G_i$ has a control input $u_i$, a measurement output $y_i$, a disturbance $w_i$ and a regulated variable $z_i$. Throughout this article, we deal with $n$ agents unless otherwise specified. Let $z = \Phi w$ with the vectors of regulated and disturbance signals $z = [z_i]_{1 \leq i \leq n}, w = [w_i]_{1 \leq i \leq n}$, and the closed-loop $\Phi$ when each $G_i$ is in feedback with its corresponding controller $K_i$. We allow the controller $K_i$ to be connected to the other controllers $K_j$. Thus, the overall controller $K$, given by the relation $u = Ky$, can be a full matrix, where $y$ and $u$ are the vector concatenations of the local measurement and control signals $y_i$ and $u_i$, respectively. For any $K$ that stabilizes the collective of $G_i$’s, the corresponding $\Phi$ can be obtained via a Youla-Kucera parametrization~\cite{youla1976modern} as
\begin{align}\label{eq:phi_map}
    \Phi = w \mapsto z = H - UQV,
\end{align}
where $H = \text{diag}(H_1, \dots, H_n), U = \text{diag}(U_1,\dots, U_n), V = \text{diag}(V_1,\dots, V_n)$ are diagonal stable systems, the elements of which can be obtained from standard factorizations of the individual $G_i$’s. The system $Q$ can be a full matrix of stable systems $[Q_{ij}]_{1 \leq i,j \leq n}$. Given this, we have
\begin{align}\label{eq:phi_matrix}
    \Phi = \begin{bmatrix}
        H_1 - U_1Q_{11}V_1 & -U_1Q_{12}V_2 \dots \\
        -U_2 Q_{21} V_1 & H_2 - U_2Q_{22}V_2 \dots \\
        \vdots & \vdots & \ddots
    \end{bmatrix}.
\end{align}
We are interested in optimizing the performance of the system with respect to a variable that measures deviations from the population average. In this sense, define
\begin{align}\label{eq:error_from_avg}
    e_i := z_i - \bar{z}, \ \ \bar{z} :=  \frac{1}{n} \sum_{j=1}^n z_j, \ \ e := [e_i]_{1\leq i \leq n}. 
\end{align}

\noindent The following map is of interest
\begin{align}\label{eq:psi_map}
    \Psi := w \mapsto e, \ \ \Psi =  \left( I - \frac{1}{n}\mathbf{1}\mathbf{1}^\top \right)\Phi = \mathbf{T}\Phi.
\end{align}

Our objective is to find a controller to minimize some norm of the operator $\Psi$ or a more general problem that involves norms other than deviation from average signals. In particular, if we let $\xi$ be an additional signal of interest, with $\xi = \Xi w$, and the corresponding closed-loop map $\Xi$, we consider
\begin{align}\label{eq:problem_with_Xi}
   \psi^o := \inf_{Q: \|\Xi\| \leq \gamma_\Xi} \|\Psi\|, 
\end{align}
where $\gamma_\Xi$ is a positive constant, the norms in the cost and
constraints are the same for simplicity\footnote{Also, for simplicity, we
do not explicitly denote the dependence of the maps and costs
on the number of agents $n$.}. The map $\Xi$ can capture regular, not necessarily deviation from average, signals such as absolute control actions (not relative to
the collective average). For instance, in the case of stable systems $G_i$ with $\xi = u$ as the signal of interest, $\Xi = Q$, thus, by constraining the absolute control action, we are regularizing problem~\eqref{eq:problem_with_Xi} to avoid singular solutions. In general, we
will have that
\begin{align*}
    \Xi = H_\xi - U_\xi Q V_\xi
\end{align*}
where $H_\xi = \text{diag}(H_{\xi_1}, \dots, H_{\xi_n}), U_\xi = \text{diag}(U_{\xi_1},\dots, U_{\xi_n}), $ $ V_\xi = \text{diag}(V_{\xi_1},\dots, V_{\xi_n})$ are diagonal stable systems, the elements of which can be obtained from standard factorizations of the individual $G_i$’s. The system $Q$ can be a full matrix of stable systems $[Q_{ij}]_{1 \leq i,j \leq n}$. In this article, we will assume that the signals $w_i, \xi_i$, and $z_i$ are scalar signals\footnote{We avoid unnecessary complexity by assuming scalar signals; the case of MIMO problems follows the same path and can be derived analogously}. We consider strongly non-identical agents, where the factorizations involved do not provide identical model matching problems for all agents~\cite{ACC18,tac21}. Here, the norm of interest is the $\ell_2$-induced, i.e., the $\mathcal{H}_\infty$ norm. We impose a regularizing bound $0 < \gamma_i < \infty$ on the diagonal entries $Q_{ii}$ for all $i$ to guarantee that the closed loop is $\ell_2$ stable as $n \to \infty$ (string stability). 

We focus on the following baseline problem, given $\alpha$
\begin{align}\label{eq:original_opt_problem}
   \psi^o := \inf_{Q \in \mathcal{Q}^\alpha} \|\Psi\| = \inf_{Q \in \mathcal{Q}^\alpha} \|\mathbf{T}\Phi\|, 
\end{align}
where $\mathcal{Q}^\alpha := \{Q | Q \ \text{is $\alpha$-column diagonal dominant}, \|Q_{ii}\| \leq \gamma_i < \infty, 1 \leq i \leq n \}$. For a given $M \leq n$, let $\Pi_M$ represent the $M^{th}$ truncation operator i.e. $\Pi_M \Psi = [\Psi_{ij}]_{1 \leq i,j \leq M}$ for $M \leq n$. Then
\begin{align}
    \|\Psi\| & \geq \| \Pi_M \Psi\|, \\
    \Pi_M \Psi & =  \Pi_M \Phi - \Pi_M \frac{1}{n} \mathbf{1}\mathbf{1}^\top \Phi.
\end{align}
$\|\Pi_M \Phi\|$ can be interpreted as capturing the total individual cost of $M$ agents within the aggregate of $n$, while $\|\Pi_M \Psi\|$ captures the total ``social" (deviation from average) cost. We assume the following about the factorization of each $G_i$

\begin{assumption}\label{assmp:bound_on_system_matrices}
    All factors $H_i$, $U_i$ and $V_i$ in~\eqref{eq:phi_matrix} are uniformly
bounded, i.e., for all i
    \begin{align}
        \| H_i \| \leq \gamma_h, \ \ \|U_i \| \leq \gamma_u, \ \ \|V_i \| \leq \gamma_v.
    \end{align}
\end{assumption}

The following lemma shows that, roughly speaking, $\Pi_M \Psi$ remains close to $\Pi_M \Phi$ for a fixed truncation $M$ of the (large) number of agents. In other words, if we consider a block of $M$ agents within an ensemble of $n$ (i.e., $n > M$), then the total individual cost of the agents captured in $\|\Pi_M \Phi \|$ is the one that matters in the total cost $\| \Pi_M \Psi \|$ as $n$ increases. This will be key in proving our main results in the sequel.

\begin{lemma}\label{lem:ensemble_residual_to_zero}
    Let Assumption~\ref{assmp:bound_on_system_matrices} holds and let $\alpha = o(\sqrt{n})$, then under the constraint $Q \in \mathcal{Q}^\alpha$ it holds that 
    \begin{align*}
     \left\|  \Pi_M \frac{1}{n} \mathbf{1}\mathbf{1}^\top \Phi \right\| \to 0, \ \text{as} \ n \to \infty.
    \end{align*}
\end{lemma}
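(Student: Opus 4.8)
The plan is to compute the block structure of $\Pi_M \frac{1}{n}\mathbf{1}\mathbf{1}^\top\Phi$ explicitly and recognize that it is a rank-one object along the row index, whose norm is governed by column-average operators that decay as $n\to\infty$. First I would observe that left-multiplication by $\mathbf{1}\mathbf{1}^\top$ replaces each block-row of $\Phi$ by the sum of all block-rows, so the $(i,j)$ block of $\frac{1}{n}\mathbf{1}\mathbf{1}^\top\Phi$ equals the column average $c_j := \frac{1}{n}\sum_{k=1}^n \Phi_{kj}$, independent of $i$. Using $\Phi_{kj} = \delta_{kj}H_k - U_k Q_{kj} V_j$ read off from~\eqref{eq:phi_matrix}, this gives $c_j = \frac{1}{n}H_j - \frac{1}{n}\sum_{k=1}^n U_k Q_{kj}V_j$, and after truncation $\Pi_M \frac{1}{n}\mathbf{1}\mathbf{1}^\top\Phi$ is the $M\times M$ block matrix all of whose rows equal $[c_1,\dots,c_M]$.

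Next I would bound $\|c_j\|$, and this is where the diagonal dominance constraint does the essential work. By Assumption~\ref{assmp:bound_on_system_matrices} and submultiplicativity of the $\mathcal{H}_\infty$ norm,
\[
\|c_j\| \le \frac{\gamma_h}{n} + \frac{\gamma_u\gamma_v}{n}\sum_{k=1}^n \|Q_{kj}\|.
\]
The column sum is precisely what $\alpha$-column diagonal dominance of $Q\in\mathcal{Q}^\alpha$ controls: $\sum_{k\neq j}\|Q_{kj}\| \le \alpha\|Q_{jj}\|$, so that $\sum_{k=1}^n \|Q_{kj}\| \le (1+\alpha)\|Q_{jj}\| \le (1+\alpha)\gamma_j$. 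Hence $\|c_j\| \le \frac{1}{n}\big(\gamma_h + \gamma_u\gamma_v(1+\alpha)\gamma_j\big)$, i.e. $\|c_j\| = O\!\big(\tfrac{1+\alpha}{n}\big)$ uniformly over $j\le M$, using that $\bar\gamma_M := \max_{1\le j\le M}\gamma_j<\infty$ since $M$ is fixed.

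Finally I would evaluate the operator norm of the rank-one block. At each frequency the truncated matrix equals $\mathbf{1}_M\, r(e^{\jmath\omega})^\top$ with $r=[c_1,\dots,c_M]^\top$, whose single nonzero singular value is $\sigma_{\rm max} = \sqrt{M}\,\|r(e^{\jmath\omega})\|_2$; taking the supremum over $\omega$ and using $\sup_\omega \|r(e^{\jmath\omega})\|_2 \le \big(\sum_{j=1}^M \|c_j\|^2\big)^{1/2}$ yields
\[
\Big\|\Pi_M \tfrac{1}{n}\mathbf{1}\mathbf{1}^\top\Phi\Big\| \le \sqrt{M}\Big(\sum_{j=1}^M \|c_j\|^2\Big)^{1/2} \le \frac{M}{n}\big(\gamma_h + \gamma_u\gamma_v(1+\alpha)\bar\gamma_M\big).
\]
Since $M$ is fixed and $\alpha = o(\sqrt{n})$ (indeed $\alpha=o(n)$ already suffices here), the right-hand side is $O\!\big(\tfrac{M(1+\alpha)}{n}\big)\to 0$, establishing the claim. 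The main obstacle is the bookkeeping in the second step: one must resist bounding the off-diagonal blocks $Q_{kj}$ individually, since these need not be small, and instead exploit that diagonal dominance caps their entire column sum by $\alpha\|Q_{jj}\|$, so that the $\tfrac{1}{n}$ averaging overwhelms the growth of $\alpha$. The factor $\sqrt{M}$ arising from the rank-one singular value is harmless precisely because $M$ remains fixed while $n\to\infty$.
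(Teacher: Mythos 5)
Your proof is correct and follows essentially the same route as the paper: decompose $\tfrac{1}{n}\mathbf{1}\mathbf{1}^\top\Phi$ into identical rows of column averages, use $\alpha$-column diagonal dominance to bound each column sum $\sum_k\|Q_{kj}\|$ by $(1+\alpha)\gamma_j$, and let the $\tfrac{1}{n}$ averaging kill the $(1+\alpha)$ growth. The only (harmless) difference is that your rank-one singular-value computation yields the slightly sharper bound $O\!\left(\tfrac{M(1+\alpha)}{n}\right)$ rather than the paper's $O\!\left(\tfrac{\sqrt{M}(1+\alpha)}{\sqrt{n}}\right)$, which is why you correctly observe that $\alpha=o(n)$ would already suffice for this lemma in isolation (the $o(\sqrt{n})$ rate is what the companion $\mathcal{H}_2$ lemma requires).
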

\begin{proof}
Let $ \frac{1}{n} \mathbf{1}\mathbf{1}^\top \Phi := \frac{1}{n} [\mathbf{1} \bar{\Phi}_1 \dots \mathbf{1}\bar{\Phi}_M]$ and $ \Pi_M \frac{1}{n} \mathbf{1}\mathbf{1}^\top \Phi := \frac{1}{n} [\mathbf{1}_M \bar{\Phi}_1 \dots \mathbf{1}_M \bar{\Phi}_M]$, where, $\bar{\Phi}_j = \sum_{i=1}^n \Phi_{ij}$ with $\bar{\Phi}_j = H_j - \sum_{i=1}^n U_i Q_{ij} V_j$. Consider, 
\begin{align}
    \left\|  \sum_{i=1}^n U_i Q_{ij} V_j \right\| &\leq \gamma_j \gamma_u \gamma_v  + \left\| \sum_{i=1, i \neq j}^n U_i Q_{ij} V_j \right \| \nonumber \\
    & \hspace{-0.4in} \leq \gamma_j \gamma_u \gamma_v + \left\| \sum_{i=1, i \neq j}^n U_i Q_{ij} \right \| \|V_j\| \nonumber \\
    & \hspace{-0.4in} \leq \gamma_j \gamma_u \gamma_v +  \sum_{i=1, i \neq j}^n \left\| U_i Q_{ij} \right \| \|V_j\| \nonumber \\
    & \hspace{-0.4in}  \leq \gamma_j \gamma_u \gamma_v +  \sup_{1 \leq i \leq n}\| U_i\|  \sum_{i=1, i \neq j}^n \left\| Q_{ij} \right \| \|V_j\| \nonumber \\
    & \hspace{-0.4in} \leq \gamma_j \gamma_u \gamma_v +  \alpha \sup_{1 \leq i \leq n}\| U_i\|  \left\| Q_{ii} \right \|\|V_j\|\nonumber \\
    & \hspace{-0.4in}  \leq \gamma_j \gamma_u \gamma_v +  \alpha \gamma_i \gamma_{u} \gamma_v \leq \gamma_{Q} \gamma_u \gamma_v + \alpha \gamma_{Q} \gamma_u \gamma_v, \nonumber
\end{align}
where, $\gamma_Q := \max_{1 \leq j \leq n} \gamma_j < \infty$. Therefore, 
\begin{align}
    \left\| \bar{\Phi}_j \right\| & \leq  \|H_j\| + \left\|\sum_{i=1}^n U_i Q_{ij} V_j \right\| \leq \gamma_h + (1 + \alpha) \gamma_{Q} \gamma_u  \gamma_v. \nonumber
\end{align}
\begin{align*}
   \mbox{Hence,} \ \frac{1}{n} \left\| \bar{\Phi}_j \right\| \leq \frac{1}{n}(\gamma_h + (1 + \alpha) \gamma_{Q} \gamma_u  \gamma_v), \ \mbox{which leads to } 
\end{align*}
\begin{align*}
    \left\|  \Pi_M \frac{1}{n} \mathbf{1}\mathbf{1}^\top \Phi \right\| & =  \left\| \frac{1}{n} [\mathbf{1} \bar{\Phi}_1 \dots \mathbf{1}\bar{\Phi}_M] \right\| \\
    &  \leq \frac{\sqrt{M}}{\sqrt{n}}[\gamma_h + (1 + \alpha) \gamma_{Q} \gamma_u  \gamma_v] \\
    &  \leq \frac{\sqrt{M}[\gamma_h + \gamma_{Q} \gamma_u \gamma_v]}{\sqrt{n}} + \frac{\sqrt{M}\gamma_u \gamma_{Q} \gamma_v \alpha}{\sqrt{n}}.
\end{align*}
Since $M$ is finite and $\alpha = o(\sqrt{n})$ we have $\left\| \Pi_M \frac{1}{n} \mathbf{1}\mathbf{1}^\top \Phi \right\| \to 0$ as $n \to \infty$.
\end{proof}

Note that $\Pi_M \frac{1}{n} \mathbf{1}\mathbf{1}^\top$ represents the effect of the ensemble average on a (fixed) block of $M$ agents. The above lemma shows that this effect diminishes as the ensemble grows. Let
\begin{align}\label{eq:diagonal_solution_problem}
    \mu_M := \inf_{ Z \in \mathcal{Q}^\alpha} \| \Pi_M H - \Pi_M UZ\Pi_M V\|,
\end{align}
where $\Pi_M H$, $\Pi_M U$, $\Pi_M V$ in the above defined optimization are the diagonal maps $\Pi_M H := \text{diag}(H_1, \dots, H_M), $ $\Pi_M U := \text{diag}(U_1,\dots, U_M), \Pi_M V := \text{diag}(V_1, \dots, V_M)$. Let $Z^{o,M}$ be the solution to problem~\eqref{eq:diagonal_solution_problem}\footnote{We assume existence to avoid standard technicalities that do not change
the results and replace optimal with arbitrarily close to optimal in the case when existence is not guaranteed.}, and let
\begin{align}\label{eq:optimal_val_diagonal_solution_problem}
    \Phi^{o,M} := \Pi_M H - \Pi_M U Z \Pi_M V.
\end{align}
Since all the systems $\Pi_M H, \Pi_M U, \Pi_M V$are diagonal
maps, $Z^{o,M}$ and $\Phi^{o,M}$ are decentralized (diagonal). Further, $\mu_M$ represents the minimum total ``individual" cost of the block of $M$ agents, and $\mu_M$ is a non-decreasing sequence which is bounded\footnote{
This is obtained by picking a suboptimal $Z = 0$, which leads to the cost $\|\Pi_M H\| \leq \gamma_h$.} by $\gamma_h$. Let $\mu^o := \limsup_{M \to \infty} \mu_M$ and note that $ \mu^o \leq \gamma_h < \infty$ and thus, $ \lim_{M \to \infty} \mu_M = \limsup_{M \to \infty} \mu_M$, i.e. $\mu^o = \lim_{M\to \infty} \mu_M$. 
Let the corresponding social cost map for the above $M$ agents be $\Psi^M := \mathbf{T}_M \Phi^{o,M}$ where, $\mathbf{T}_M := \left( I_M - \frac{1}{M}\mathbf{1}_M\mathbf{1}_M^\top \right)$. The following Theorem shows that $\Psi^M$ in fact describes a solution for problem~\eqref{eq:original_opt_problem} as $n \to \infty$.

\begin{theorem}\label{theorem:H_inf_equivalent}
    Let Assumption~\ref{assmp:bound_on_system_matrices} holds and let $\alpha = o(\sqrt{n})$, then under the constraint $Q \in \mathcal{Q}^\alpha$ it holds that 
    \begin{align*}
       \lim_{n \to \infty} \psi^o = \mu^o,
    \end{align*}
    and arbitrarily close to the optimal decentralized controller can be obtained by $Z^{o,M}$ for sufficiently large $M$.
\end{theorem}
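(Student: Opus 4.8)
The plan is to establish $\lim_{n\to\infty}\psi^o = \mu^o$ through a two-sided bound, proving $\liminf_{n\to\infty}\psi^o \geq \mu^o$ and $\limsup_{n\to\infty}\psi^o \leq \mu^o$ separately, and then to read off the decentralized realization from the upper-bound construction. For the lower bound I would fix a truncation level $M$ and combine the monotonicity $\|\Psi\| \geq \|\Pi_M\Psi\|$ with the decomposition $\Pi_M\Psi = \Pi_M\Phi - \Pi_M\frac{1}{n}\mathbf{1}\mathbf{1}^\top\Phi$ already recorded above. The structural point is that, because $U$ and $V$ are diagonal, the truncated closed loop factors cleanly as $\Pi_M\Phi = \Pi_M H - \Pi_M U\,(\Pi_M Q)\,\Pi_M V$, and the truncated parameter $\Pi_M Q$ still lies in $\mathcal{Q}^\alpha$: the diagonal bounds are inherited verbatim, and restricting a column sum to the first $M$ rows can only shrink it, so $\alpha$-column diagonal dominance survives. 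Hence $\Pi_M Q$ is feasible for problem~\eqref{eq:diagonal_solution_problem}, giving $\|\Pi_M\Phi\| \geq \mu_M$. A reverse triangle inequality together with the uniform-in-$Q$ estimate of Lemma~\ref{lem:ensemble_residual_to_zero} then yields $\|\Psi\| \geq \mu_M - \varepsilon_M(n)$ with $\varepsilon_M(n)\to 0$; taking the infimum over $Q\in\mathcal{Q}^\alpha$, then sending $n\to\infty$ for fixed $M$, and finally $M\to\infty$, produces $\liminf_{n\to\infty}\psi^o \geq \mu^o$.

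For the upper bound I would exhibit an explicit feasible \emph{diagonal} parameter. I first argue that the minimizer $Z^{o,M}$ of~\eqref{eq:diagonal_solution_problem} may be taken diagonal: since the norm of a matrix dominates the norm of each principal block, any off-diagonal entry of $Z$ can only enlarge the cost, so the block problem decouples and $\mu_M = \max_{1\leq i\leq M}\nu_i$, where $\nu_i := \inf_{\|Z_i\|\leq\gamma_i}\|H_i - U_i Z_i V_i\|$ is the purely local cost of agent $i$; consequently $\mu^o = \sup_i \nu_i$. Choosing the diagonal $Q = \text{diag}(Z_1^o,\dots,Z_n^o)$ with each $Z_i^o$ locally optimal is feasible (diagonal dominance is vacuous and the diagonal bounds hold), and the resulting $\Phi$ is diagonal. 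Because $\mathbf{T} = I - \frac{1}{n}\mathbf{1}\mathbf{1}^\top$ is an orthogonal projection with $\|\mathbf{T}\| = 1$, applying it at each frequency gives $\|\Psi\| = \|\mathbf{T}\Phi\| \leq \|\Phi\| = \max_{1\leq i\leq n}\nu_i = \mu_n$. Thus $\psi^o \leq \mu_n$, and since $\mu_n \to \mu^o$ we conclude $\limsup_{n\to\infty}\psi^o \leq \mu^o$; combined with the lower bound this gives the claimed limit.

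The decentralized assertion then follows directly from this construction: the near-optimal $Q$ is diagonal, hence decentralized and selfish, and its first $M$ diagonal entries coincide with $Z^{o,M}$. Since $\mu_M \to \mu^o = \lim_{n\to\infty}\psi^o$, for any $\varepsilon>0$ a finite $M$ with $\mu^o - \mu_M < \varepsilon$ makes $Z^{o,M}$ attain a cost within $\varepsilon$ of optimal, which is exactly the stated approximation property.

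I expect the main obstacle to be the lower bound, specifically verifying that truncation maps the full constrained problem faithfully into the diagonal problem — that is, the factorization $\Pi_M\Phi = \Pi_M H - \Pi_M U\,(\Pi_M Q)\,\Pi_M V$ and the closure of $\mathcal{Q}^\alpha$ under $\Pi_M$ — and carefully ordering the limits so that the vanishing residual of Lemma~\ref{lem:ensemble_residual_to_zero} is invoked uniformly over the feasible set before $M\to\infty$. By contrast, the upper bound is comparatively routine once the projection bound $\|\mathbf{T}\|=1$ and the decoupling of the diagonal problem are in place.
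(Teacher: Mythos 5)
Your proposal is correct and follows essentially the same route as the paper: the lower bound via truncation, the uniform vanishing of $\Pi_M\frac{1}{n}\mathbf{1}\mathbf{1}^\top\Phi$ from Lemma~\ref{lem:ensemble_residual_to_zero}, and the definition of $\mu_M$; the upper bound via the diagonal feasible parameter together with $\|\mathbf{T}\|=1$ and $\mu_M\to\mu^o$. Your two added verifications --- that $\mathcal{Q}^\alpha$ is closed under truncation so that $\|\Pi_M\Phi\|\geq\mu_M$ is legitimate, and that the diagonal problem decouples into scalar model-matching problems with $\mu_M=\max_{1\leq i\leq M}\nu_i$ --- are sound refinements of steps the paper leaves implicit, not a different argument.
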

\begin{proof}
    From Lemma~\ref{lem:ensemble_residual_to_zero}, we have that given any fixed $M \leq n$ and any $Q$ satisfying the constraint, $Q\in \mathcal{Q}^\alpha$, selected to form $\Psi$ as in~\eqref{eq:psi_map} and the corresponding $\Phi$ as in~\eqref{eq:phi_map} we have 
    \begin{align*}
        \| \Psi\| \geq \|\Pi_M \Psi \| \geq \| \Pi_M \Phi \| - \varepsilon,
    \end{align*}
    where $\varepsilon \to 0$ as $n \to \infty$. Therefore, $\liminf_{n \to \infty} \| \Psi\| \geq \limsup_{n \to \infty} \| \Pi_M \Phi \|$. However, from the definition of $\mu_M$, for any $Q$ we have $\| \Pi_M \Phi \| \geq \mu_M$, so $\liminf_{n \to \infty} \|\Psi\| \geq \mu_M$ and therefore, 
    \begin{align*}
        \liminf_{n \to \infty} \|\Psi\| \geq \limsup_{n \to \infty} \mu_M = \mu^o.
    \end{align*}
As the above is valid for any sequence of $Q$s we choose, it would also hold for the optimal $Q$ for every $n$, and thus 
\begin{align}\label{eq:dummy}
    \liminf_{n \to \infty} \ \psi^o \geq \mu^o,
\end{align}
which suggests that $\mu^o$ is a lower bound on the optimal asymptotic performance as $n \to \infty$. We now demonstrate that this can be achieved by decentralized control. Indeed, if we let $\Psi^M = \mathbf{T}_M \Phi^{o,M}$, i.e., the mapping we get for $M$ agents by solving for the optimal $Z$ in~\eqref{eq:diagonal_solution_problem}, which leads to a decentralized controller. We get for the corresponding performance of the this $M$ agent system $\|\Psi^M\| \leq \|\mathbf{T}_M\| \| \Phi^{o,M}\|$ or since, $\|\mathbf{T}_M\| = 1$ and $\| \Phi^{o,M}\| = \mu_M$, we have $\|\Psi^M\| \leq \mu_M$. At the same time, as $\Psi^M$ is suboptimal, thus, $ \psi^o \leq \|\Psi^M\|$, where $\psi^o$ is the optimal cost in~\eqref{eq:original_opt_problem} with $M = n$. Hence, 
\begin{align*}
    \liminf_{M \to \infty} \psi^o \leq \limsup_{M \to \infty} \psi^o \leq \limsup_{M \to \infty} \|\Psi^M\| \leq \limsup_{M \to \infty} \mu_M = \mu^o
\end{align*}
or, using~\eqref{eq:dummy} that $\liminf_{M \to \infty} \psi^o \geq \mu^o$ we have that $ \lim_{M\to \infty} \| \Psi^M\| = \mu^o$, which completes the proof. 
\end{proof}

\subsection{(Scaled) $\mathcal{H}_2$ Case}
We now consider $\mathcal{H}_2$ norm minimization. As in the previous section, we focus on the following baseline problem of interest, given $\alpha$
\begin{align}\label{eq:original_h2_opt_problem}
   \psi^o_2 := \inf_{Q \in \mathcal{Q}^\alpha}  \frac{1}{\sqrt{n}} \|\Psi\|_2, 
\end{align}
where we impose the same constraint as in problem~\eqref{eq:original_opt_problem}. We scale the $\mathcal{H}_2$ cost by $\frac{1}{\sqrt{n}}$ where $n$ is the number of agents, as the unscaled cost will become unbounded as $n \to \infty$ preventing any comparison with the decentralized optimal cost. In this way, \eqref{eq:original_h2_opt_problem} is uniformly bounded for all $n$ and $(\psi^o_2)^2$ is interpretable as the average cost (energy) per agent. 

\begin{lemma}\label{lem:ensemble_h2_residual_to_zero}
    Let Assumption~\ref{assmp:bound_on_system_matrices} holds and let $\alpha = o(\sqrt{n})$, then under constraint $Q \in \mathcal{Q}^\alpha$ it holds that 
    \begin{align*}
        \lim_{n \to \infty} \frac{1}{\sqrt{n}} \left\| \frac{1}{n} \mathbf{1}\mathbf{1}^\top \Phi \right\|_2 = 0.
    \end{align*}
\end{lemma}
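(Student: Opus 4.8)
The plan is to mirror the argument of Lemma~\ref{lem:ensemble_residual_to_zero}, but carried out in the $\mathcal{H}_2$ norm and on the full (untruncated) $n\times n$ matrix. First I would exploit the rank-one structure of the averaging operator. Since $\frac{1}{n}\mathbf{1}\mathbf{1}^\top\Phi$ has every row equal to the column-sum vector $\frac{1}{n}[\bar{\Phi}_1,\dots,\bar{\Phi}_n]$, with $\bar{\Phi}_j = \sum_{i=1}^n \Phi_{ij} = H_j - \sum_{i=1}^n U_i Q_{ij} V_j$ as in the proof of Lemma~\ref{lem:ensemble_residual_to_zero}, the sum-of-squares definition of the matrix $\mathcal{H}_2$ norm collapses: the $n$ identical rows each contribute $\frac{1}{n^2}\|\bar{\Phi}_j\|_2^2$ per column, so
\begin{align*}
\left\|\frac{1}{n}\mathbf{1}\mathbf{1}^\top\Phi\right\|_2^2 = \sum_{i=1}^n\sum_{j=1}^n \frac{1}{n^2}\|\bar{\Phi}_j\|_2^2 = \frac{1}{n}\sum_{j=1}^n\|\bar{\Phi}_j\|_2^2,
\end{align*}
and hence $\frac{1}{\sqrt n}\left\|\frac{1}{n}\mathbf{1}\mathbf{1}^\top\Phi\right\|_2 = \frac{1}{n}\big[\sum_{j=1}^n\|\bar{\Phi}_j\|_2^2\big]^{1/2}$.

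The key step is to control each scalar $\mathcal{H}_2$ norm $\|\bar{\Phi}_j\|_2$ using the $\mathcal{H}_\infty$ bound already obtained in Lemma~\ref{lem:ensemble_residual_to_zero}. Because all signals are scalar, Parseval's identity gives $\|\bar{\Phi}_j\|_2^2 = \frac{1}{2\pi}\int_{-\pi}^{\pi}|\bar{\Phi}_j(e^{\jmath\omega})|^2\,d\omega \le \sup_\omega|\bar{\Phi}_j(e^{\jmath\omega})|^2 = \|\bar{\Phi}_j\|^2$, i.e. $\|\bar{\Phi}_j\|_2 \le \|\bar{\Phi}_j\|$. This lets me reuse, verbatim, the chain of inequalities from Lemma~\ref{lem:ensemble_residual_to_zero} that invoked Assumption~\ref{assmp:bound_on_system_matrices} and the $\alpha$-column diagonal dominance of $Q$ to conclude $\|\bar{\Phi}_j\| \le \gamma_h + (1+\alpha)\gamma_Q\gamma_u\gamma_v$, with $\gamma_Q := \max_j\gamma_j < \infty$.

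Combining the two steps, each term satisfies $\|\bar{\Phi}_j\|_2^2 \le (\gamma_h + (1+\alpha)\gamma_Q\gamma_u\gamma_v)^2$, and since there are $n$ such terms,
\begin{align*}
\frac{1}{\sqrt n}\left\|\frac{1}{n}\mathbf{1}\mathbf{1}^\top\Phi\right\|_2 \le \frac{\sqrt n}{n}\big(\gamma_h + (1+\alpha)\gamma_Q\gamma_u\gamma_v\big) = \frac{\gamma_h + \gamma_Q\gamma_u\gamma_v}{\sqrt n} + \frac{\alpha\,\gamma_Q\gamma_u\gamma_v}{\sqrt n}.
\end{align*}
The first term vanishes as $n\to\infty$, and the second vanishes precisely under the hypothesis $\alpha = o(\sqrt n)$, which establishes the claim.

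The only genuinely new ingredient beyond Lemma~\ref{lem:ensemble_residual_to_zero} is the norm comparison $\|\cdot\|_2 \le \|\cdot\|$ for scalar transfer functions, and this is what I expect to be the crux. Assumption~\ref{assmp:bound_on_system_matrices} and the constraint set $\mathcal{Q}^\alpha$ supply only $\mathcal{H}_\infty$ bounds on the factors $H_i,U_i,V_i$ and on $Q_{ii}$; a direct $\mathcal{H}_2$ estimate of $\sum_i U_i Q_{ij} V_j$ would instead require an $\mathcal{H}_2$ bound on at least one factor, which is not available. Routing through $\mathcal{H}_\infty$ sidesteps this difficulty and makes the scalar-signal hypothesis essential, after which the remaining bookkeeping is identical to the preceding lemma, with the $\frac{1}{\sqrt n}$ scaling exactly absorbing the $\sqrt n$ generated by the $n$ identical column-sum terms.
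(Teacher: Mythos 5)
Your proof is correct and follows essentially the same route as the paper: both exploit the rank-one structure of $\frac{1}{n}\mathbf{1}\mathbf{1}^\top\Phi$ to collapse the $\mathcal{H}_2$ norm to $n$ identical column-sum terms, bound each column sum by $O(1+\alpha)$ via Assumption~\ref{assmp:bound_on_system_matrices} and the $\alpha$-column diagonal dominance, and conclude from $\frac{1+\alpha}{\sqrt{n}}\to 0$. The only (harmless) difference is where the comparison $\|\cdot\|_2\le\|\cdot\|$ is invoked: you apply it once to the whole column sum $\bar{\Phi}_j$ so as to reuse Lemma~\ref{lem:ensemble_residual_to_zero} verbatim, whereas the paper re-derives the estimate directly in the $\mathcal{H}_2$ norm by splitting $\Phi$ into its $H$, $UQ$, and $UQV$ pieces and implicitly using the same comparison on the factor $U_i$ (so your closing remark that a direct $\mathcal{H}_2$ estimate is unavailable is slightly overstated, though immaterial to your argument).
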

\begin{proof}
    Consider, 
    \begin{align*}
        \left\| \frac{1}{n} \mathbf{1}\mathbf{1}^\top U Q \right\|_2^2 & =  \frac{1}{n^2} \left \| [\mathbf{1} U_1 \dots \mathbf{1}U_n] Q \right \|_2^2 \\
        & = \frac{1}{n^2} \left \| \begin{bmatrix}
            A \\
            A \\
            \vdots \\
            A
        \end{bmatrix} \right \|_2^2
    \end{align*}
    where,
    \begin{align*}
    A := \left[ \sum_{j=1}^n U_j Q_{j1} \ \sum_{j=1}^n U_j Q_{j2} \dots \ \sum_{j=1}^n U_j Q_{jn} \right]. 
    \end{align*}
    Therefore, 
    \begin{align*}
         \frac{1}{n^2} \left\| \mathbf{1}\mathbf{1}^\top U Q \right\|_2^2 =  \frac{1}{n} \|A\|_2^2 = \frac{1}{n} \sum_{i=1}^n \left \|\sum_{j=1}^n U_j Q_{ji} \right\|_2^2.
    \end{align*}
    We focus on 
    \begin{align*}
        \left \|\sum_{j=1}^n U_j Q_{ji} \right\|_2^2 &=  \left \| U_i Q_{ii} + \sum_{j=1, j \neq i}^n U_j Q_{ji} \right\|_2^2 \\
        &  \hspace{-0.5in} = \| U_i Q_{ii} \|_2^2 + \left \| \sum_{j=1, j \neq i}^n U_j Q_{ji} \right\|_2^2 \\
        &  \hspace{-0.4in} + 2 \|U_i Q_{ii}\|_2\left \| \sum_{j=1, j \neq i}^n U_j Q_{ji} \right\| \\
        & \hspace{-0.5in} \leq \|U_i\|_2^2\|Q_{ii}\|^2 + \left( \sum_{j=1, j \neq i}^n \|U_j Q_{ji}\|_2 \right)^2 \\
        &  \hspace{-0.4in} + 2 \|U_i Q_{ii}\|_2 \left \| \sum_{j=1, j \neq i}^n U_j Q_{ji} \right\|_2 \\
        & \hspace{-0.5in} \leq \sup_{1 \leq i \leq n} \|U_i\|^2\|Q_{ii}\|^2 \\
        &  \hspace{-0.4in} + \sup_{1\leq j \leq n} \|U_j\|^2 \left( \sum_{j=1, j \neq i}^n \| Q_{ji}\| \right)^2 \\
        &  \hspace{-0.4in} + 2 \sup_{1\leq i \leq n} \|U_i\| \|Q_{ii}\| \sup_{1\leq j \leq n} \|U_j\|  \sum_{j=1, j \neq i}^n \| Q_{ji} \| \\
        & \hspace{-0.5in} \leq \gamma_u^2 \gamma_Q^2 ( 1 + \alpha^2 + 2 \alpha ) = \gamma_u^2 \gamma_Q^2 (1+\alpha)^2.  
    \end{align*}
    Therefore, 
    \begin{align*}
         \left\| \frac{1}{n} \mathbf{1}\mathbf{1}^\top U Q \right\|_2^2 \leq \frac{1}{n} \left( n \gamma_u^2 \gamma_Q^2 (1+\alpha)^2 \right) = \gamma_u^2 \gamma_Q^2 (1+\alpha)^2.
    \end{align*}
    Hence,
    \begin{align*}
        \frac{1}{n} \left\| \frac{1}{n} \mathbf{1}\mathbf{1}^\top U Q \right\|_2^2 \leq \frac{\gamma_u^2 \gamma_Q^2 (1+\alpha)^2}{n} = \gamma_u^2 \gamma_Q^2 \left(\frac{1}{\sqrt{n}}+\frac{\alpha}{\sqrt{n}} \right)^2. 
    \end{align*}
    Therefore, $\frac{1}{n} \left\| \frac{1}{n} \mathbf{1}\mathbf{1}^\top U Q \right\|_2^2 \to 0$ as $n \to \infty$. Similarly, due to $\|H\| \leq \gamma_h, \|V\| \leq \gamma_v$
    \begin{align*}
        \frac{1}{n} \left\| \frac{1}{n} \mathbf{1}\mathbf{1}^\top H \right\|_2^2 \to 0 \ \text{and} \ \frac{1}{n} \left\| \frac{1}{n} \mathbf{1}\mathbf{1}^\top U Q V \right\|_2^2 \to 0,
    \end{align*}
    as $n \to \infty$ so that $\frac{1}{\sqrt{n}} \left\| \frac{1}{n} \mathbf{1}\mathbf{1}^\top \Phi \right \|_2 \to 0$. 
\end{proof}
Let
\begin{align}\label{eq:H2_decentalized_problem}
   \mu^o_2 := \inf_{Q \in \mathcal{Q}^\alpha}  \frac{1}{\sqrt{n}} \|\Phi\|_2. 
\end{align}
\begin{theorem}\label{theorem:H_2_equivalent}
    Let Assumption~\ref{assmp:bound_on_system_matrices} holds and let $\alpha = o(\sqrt{n})$, then under the constraint $Q \in \mathcal{Q}^\alpha$ it holds that
    \begin{align*}
        \lim_{n \to \infty} (\psi^o_2 - \mu^o_2) = 0.
    \end{align*} 
    Moreover, the solution to~\eqref{eq:H2_decentalized_problem} is decentralized (diagonal) and it also minimizes~\eqref{eq:original_h2_opt_problem} for large enough $n$.
\end{theorem}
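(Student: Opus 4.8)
The plan is to exploit the additivity of the $\mathcal{H}_2$ norm over the block entries, which lets me compare the two full $n$-agent objectives directly and bypass the truncation operator $\Pi_M$ needed in the proof of Theorem~\ref{theorem:H_inf_equivalent}. Writing $\Psi-\Phi = -\frac{1}{n}\mathbf{1}\mathbf{1}^\top\Phi$, the reverse triangle inequality yields, for every feasible $Q\in\mathcal{Q}^\alpha$,
\begin{align*}
\left| \frac{1}{\sqrt{n}}\|\Psi\|_2 - \frac{1}{\sqrt{n}}\|\Phi\|_2 \right| \le \frac{1}{\sqrt{n}}\left\|\frac{1}{n}\mathbf{1}\mathbf{1}^\top\Phi\right\|_2.
\end{align*}
The essential point is that the estimate in Lemma~\ref{lem:ensemble_h2_residual_to_zero} depends only on $\gamma_h,\gamma_u,\gamma_v,\gamma_Q,\alpha$, and $n$, so it holds \emph{uniformly} over $Q\in\mathcal{Q}^\alpha$; denote this uniform bound by $\bar\delta_n$, where $\bar\delta_n\to 0$ as $n\to\infty$.

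Next I would transfer the pointwise closeness to the infima. Since $\frac{1}{\sqrt n}\|\Psi\|_2 \ge \frac{1}{\sqrt n}\|\Phi\|_2 - \bar\delta_n \ge \mu^o_2 - \bar\delta_n$ for every feasible $Q$, taking the infimum over $\mathcal{Q}^\alpha$ gives $\psi^o_2 \ge \mu^o_2 - \bar\delta_n$; the symmetric argument gives $\mu^o_2 \ge \psi^o_2 - \bar\delta_n$. Hence $|\psi^o_2-\mu^o_2|\le\bar\delta_n\to 0$, which is the first claim.

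For the structure of the minimizer of \eqref{eq:H2_decentalized_problem}, I would expand the $\mathcal{H}_2$ norm entrywise using \eqref{eq:phi_matrix},
\begin{align*}
\|\Phi\|_2^2 = \sum_{i=1}^n \|H_i - U_i Q_{ii} V_i\|_2^2 + \sum_{i\neq j}\|U_i Q_{ij} V_j\|_2^2.
\end{align*}
The off-diagonal terms are nonnegative and involve only the off-diagonal blocks; setting $Q_{ij}=0$ for $i\neq j$ removes them, leaves the diagonal cost and the bound $\|Q_{ii}\|\le\gamma_i$ untouched, and trivially satisfies the diagonal-dominance constraint $\alpha\|Q_{jj}\|\ge\sum_{i\neq j}\|Q_{ij}\|=0$. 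Thus the infimum is achieved by a diagonal $Q^o=\mathrm{diag}(Q^o_{11},\dots,Q^o_{nn})$ whose entries independently solve the decoupled local problems $\min_{\|Q_{ii}\|\le\gamma_i}\|H_i-U_iQ_{ii}V_i\|_2$, i.e. the optimal $Q^o$ is decentralized.

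Finally, to see that this diagonal $Q^o$ is asymptotically optimal for the social problem \eqref{eq:original_h2_opt_problem}, let $\Phi^o$ be its closed loop. Feasibility gives $\psi^o_2\le\frac{1}{\sqrt n}\|\mathbf{T}\Phi^o\|_2$, while the frequency-wise nonexpansiveness of the orthogonal projection $\mathbf{T}$ (or, alternatively, the residual bound of Lemma~\ref{lem:ensemble_h2_residual_to_zero}) gives $\frac{1}{\sqrt n}\|\mathbf{T}\Phi^o\|_2\le\frac{1}{\sqrt n}\|\Phi^o\|_2=\mu^o_2$. Since $\psi^o_2\to\mu^o_2$, the social cost of $Q^o$ is squeezed between $\psi^o_2$ and $\mu^o_2$ and hence converges to $\lim_n\psi^o_2$, so $Q^o$ attains the optimal social cost in the limit. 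I expect the only real obstacle to be the uniformity in the first step: the interchange of limit and infimum is legitimate precisely because Lemma~\ref{lem:ensemble_h2_residual_to_zero} furnishes a $Q$-independent bound; the remaining steps are the additive decomposition of the $\mathcal{H}_2$ norm and the elementary feasibility check for zeroing off-diagonal blocks.
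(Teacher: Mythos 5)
Your proof is correct and follows essentially the same route as the paper: the reverse triangle inequality applied to $\Psi = \Phi - \frac{1}{n}\mathbf{1}\mathbf{1}^\top\Phi$ combined with Lemma~\ref{lem:ensemble_h2_residual_to_zero} to force $|\psi^o_2-\mu^o_2|\to 0$. You do make explicit two steps the paper leaves implicit --- that the residual bound of Lemma~\ref{lem:ensemble_h2_residual_to_zero} is uniform over $Q\in\mathcal{Q}^\alpha$ (which is what legitimizes passing from the pointwise inequality to the infima), and the entrywise $\mathcal{H}_2$ decomposition showing that zeroing the off-diagonal blocks of $Q$ is feasible and cost-reducing, hence the minimizer of~\eqref{eq:H2_decentalized_problem} is diagonal --- both of which are accurate and strengthen the argument rather than diverge from it.
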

\begin{proof}
Note that
\begin{align*}
    \|\Phi\|_2 - \left \| \frac{1}{n} \mathbf{1}\mathbf{1}^\top \Phi \right \|_2 \leq \|\Psi\|_2 \leq \|\Phi\|_2 + \left \| \frac{1}{n} \mathbf{1}\mathbf{1}^\top \Phi \right \|_2
\end{align*}
or
\begin{align*}
       \frac{-1}{\sqrt{n}} \left \| \frac{1}{n} \mathbf{1}\mathbf{1}^\top \Phi \right \|_2 \leq \frac{1}{\sqrt{n}} \|\Phi\|_2 - \frac{1}{\sqrt{n}} \|\Psi\|_2 \leq \frac{1}{\sqrt{n}} \left \| \frac{1}{n} \mathbf{1}\mathbf{1}^\top \Phi \right \|_2
\end{align*}
which proves that $\lim_{n \to \infty}(\psi^o_2 - \mu^o_2) = 0$ from Lemma~\ref{lem:ensemble_h2_residual_to_zero}. Also note that $\mu^o_2$ (and consequently $\psi^o_2)$ is bounded uniformly in $n$ as $\mu^o_2 \leq \frac{1}{\sqrt{n}}\|H\|_2$ and 
\begin{align*}
   \| H \|_2^2 = \|H_1\|^2_2 + \dots + \| H_n\|^2_2 \leq n \gamma_h^2 
\end{align*}
so that $\mu^o_2 \leq \gamma_h$. The solution $Q^o$ is decentralized (diagonal) follows immediately from the fact that $H$, $U$, and $V$ are decentralized (diagonal).
\end{proof}

\begin{remark}
So far we used $\Xi \equiv Q$ for simplicity
in the constrained problems. Suppose we impose uniform bounds on $\|\Xi_{ii}\|$ for all $i$ and each $n$ instead. In that case, this is equivalent to imposing a uniform bound on $\|Q_{ii}\|$ if $U_{\xi_i}(\lambda)$ and $V_{\xi_i}(\lambda)$ have rank uniformly bounded away from zero on the circle $|\lambda| = 1$. This would be when the standard assumptions for non-singular problems are satisfied~\cite{dahleh1994control}. Similar remarks regarding $\Xi$ apply to the 2-block formulations presented in the following sections.
\end{remark}

\begin{remark}
We emphasize that Assumption~\ref{assmp:bound_on_system_matrices}, which requires a uniform bound on the $\mathcal{H}_\infty$ norm of the collective, serves only as a sufficient condition for establishing the optimality of decentralized and selfish control. Importantly, this requirement is not necessary. In fact, it merely imposes a condition on the coprime factors of each agent $i$. When these factors are identical, the assumption becomes redundant, reducing the problem to the homogeneous-agent case analyzed in \cite{voueli17journal, ACC18}. Moreover, all of the $\mathcal{H}_\infty$ results presented here remain valid for systems with time-varying dynamics, where the relevant norm is the $\ell_2$-induced norm.
\end{remark}

\section{2-block problems}\label{sec:2bloack_problems}
In this section, we derive equivalent results for 2-block problem formulations, starting with the $\mathcal{H}_\infty$ case.
\subsection{ ${\cal H}_\infty$ case}\label{sec:2block_H_inf}
Here, we are interested in solving the problem,  
\begin{equation}\label{eq:2block_H_inf}
\psi^o := \inf_{Q \in \mathcal{Q}^\alpha} \left\| \begin{bmatrix}
    \Psi \\ Q
\end{bmatrix} \right\|.
\end{equation}
Using the same approach we took for problem~(\ref{eq:original_opt_problem}) it holds that for the ${\cal H}_\infty$ case, obtaining 
$Z^{o,M}$ that solves 
\begin{equation}\label{eq:2block-M}
\mu_M := \inf_{ Z \in \mathcal{Z}^\alpha} \left\|  \begin{bmatrix}
\Pi_M\Phi \\ 
Z
\end{bmatrix}
\right\| 
\end{equation}
for sufficiently large $M$, provides a decentralized controller that delivers performance arbitrarily close to $\lim_{n\rightarrow\infty}\psi^o$. In particular, if 
$\displaystyle\mu^o:=\limsup_{M\rightarrow\infty}\mu_M$,
we note that $\mu_M$ is a non-decreasing sequence of $M$ which is bounded by $\gamma_h$ and thus $\displaystyle\lim_{M\rightarrow\infty}\mu_M=\displaystyle\limsup_{M\rightarrow\infty}\mu_M$, i.e.,
$\displaystyle\mu^o=\lim_{M\rightarrow\infty}\mu_M$, and we have the following result.

\begin{theorem}\label{theorem:H_inf_2block}
Let Assumption~\ref{assmp:bound_on_system_matrices} holds and $\alpha = o(\sqrt{n})$, then under the constraint $Q \in \mathcal{Q}^\alpha$ for the $\cal{H}_\infty$ problem~(\ref{eq:2block_H_inf}) we have 
\begin{align*}
    \lim_{n\rightarrow\infty}\psi^o=\mu^o,
\end{align*} and arbitrarily close to the optimal decentralized controller can be obtained by $Z^{o, M}$ for sufficiently large $M$. 
\end{theorem}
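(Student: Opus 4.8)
The plan is to follow the two-sided argument of Theorem~\ref{theorem:H_inf_equivalent} almost verbatim, establishing matching lower and upper bounds on $\lim_{n\to\infty}\psi^o$. The only genuinely new ingredient is the bookkeeping needed to carry the extra $Q$-block through the stacked $\mathcal{H}_\infty$ norm; both bounds then reduce to facts already available, namely Lemma~\ref{lem:ensemble_residual_to_zero} and $\|\mathbf{T}_M\|=1$.

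First I would establish the lower bound. Fix $M\le n$ and take any feasible $Q\in\mathcal{Q}^\alpha$. Since truncation does not increase the induced norm, and since the perturbation $\Pi_M\Psi-\Pi_M\Phi=-\Pi_M\frac{1}{n}\mathbf{1}\mathbf{1}^\top\Phi$ enters only the top block, the reverse triangle inequality gives
\begin{align*}
\left\|\begin{bmatrix}\Psi\\Q\end{bmatrix}\right\|
\ge \left\|\begin{bmatrix}\Pi_M\Psi\\\Pi_M Q\end{bmatrix}\right\|
\ge \left\|\begin{bmatrix}\Pi_M\Phi\\\Pi_M Q\end{bmatrix}\right\| - \left\|\Pi_M\tfrac{1}{n}\mathbf{1}\mathbf{1}^\top\Phi\right\|.
\end{align*}
The key observation is that $\Pi_M Q$ is itself admissible for the $M$-agent problem: column diagonal dominance and the bounds $\|Q_{ii}\|\le\gamma_i$ are inherited under truncation, since the truncated off-diagonal column sums only shrink. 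Because $U,V$ are diagonal, $\Pi_M\Phi=\Pi_M H-\Pi_M U(\Pi_M Q)\Pi_M V$, so the stacked block is a feasible point of~\eqref{eq:2block-M} and its norm is at least $\mu_M$. Lemma~\ref{lem:ensemble_residual_to_zero} bounds the residual by some $\delta_n\to0$ \emph{uniformly} over $Q\in\mathcal{Q}^\alpha$, which lets me infimize over $Q$ to obtain $\psi^o\ge\mu_M-\delta_n$; sending $n\to\infty$ and then $M\to\infty$ yields $\liminf_{n\to\infty}\psi^o\ge\mu^o$.

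Next I would establish the upper bound by exhibiting a decentralized candidate for the $n$-agent problem with $n=M$. Let $Z^{o,M}$ solve~\eqref{eq:2block-M}, which is decentralized (diagonal) by the same structural reasoning used in Theorem~\ref{theorem:H_inf_equivalent}, with $\mu_M=\left\|\begin{bmatrix}\Phi^{o,M}\\Z^{o,M}\end{bmatrix}\right\|$. Using this controller in the social problem produces the stack
\begin{align*}
\begin{bmatrix}\mathbf{T}_M\Phi^{o,M}\\Z^{o,M}\end{bmatrix}
= \mathrm{diag}(\mathbf{T}_M,\,I)\begin{bmatrix}\Phi^{o,M}\\Z^{o,M}\end{bmatrix}.
\end{align*}
Since $\mathbf{T}_M$ is an orthogonal projection we have $\|\mathbf{T}_M\|=1$, hence $\|\mathrm{diag}(\mathbf{T}_M,I)\|=\max(1,1)=1$; this is the crucial point that prepending the averaging projection and appending the identity block leaves the induced norm unchanged. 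Submultiplicativity then gives a cost $\le\mu_M$ for this suboptimal decentralized controller, so $\psi^o\le\mu_M$, and taking $\limsup_{M\to\infty}$ produces $\limsup_{n\to\infty}\psi^o\le\mu^o$.

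Combining the two bounds gives $\lim_{n\to\infty}\psi^o=\mu^o$, achieved arbitrarily closely by $Z^{o,M}$ for large $M$. I expect the main difficulty to be organizational rather than analytical, and it lies entirely in correctly threading the $Q$-block through both bounds: verifying that truncation $\Pi_M$ preserves the diagonal-dominance feasibility in the lower bound (so that $\left\|\begin{bmatrix}\Pi_M\Phi\\\Pi_M Q\end{bmatrix}\right\|\ge\mu_M$ is legitimate), and recognizing in the upper bound that $\mathrm{diag}(\mathbf{T}_M,I)$ retains unit induced norm so that stacking $Q$ does not inflate the cost beyond $\mu_M$. Once these two observations are in place, the remainder is identical to the single-block case.
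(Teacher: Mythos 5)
Your proposal is correct and follows essentially the same route as the paper, which truncates the stacked map, invokes Lemma~\ref{lem:ensemble_residual_to_zero} to kill the averaging residual, and then defers to the two-sided argument of Theorem~\ref{theorem:H_inf_equivalent}. You actually make explicit two details the paper leaves implicit---that $\Pi_M Q$ inherits feasibility so $\left\|\begin{bmatrix}\Pi_M\Phi\\ \Pi_M Q\end{bmatrix}\right\|\ge\mu_M$, and that $\|\mathrm{diag}(\mathbf{T}_M,I)\|=1$ in the upper bound---both of which are accurate.
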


\begin{proof} Since for any solution $Q$ it holds that
\begin{align*}
\left\|\begin{bmatrix}
    \Psi \\ Q
\end{bmatrix} \right\|\leq
\left\| \begin{bmatrix}
    \mathbf{T}H \\ 0
\end{bmatrix} \right\| \leq \left\| H\right\|,
\end{align*}
where we used, the fact $\|\mathbf{T}\| = 1$.
Thus, it is enough to search for $Q \in \mathcal{Q}^\alpha$ with $\left\|Q_{ii}\right\| = \gamma_i \le
\left\| H_{i}\right \|\le \gamma_h$ for all $i$.  Hence, everything holds as in the constrained case with $Q \in \mathcal{Q}^\alpha$ i.e.,
\begin{align}
    \left\| \left[\begin{array}{c}
    \Psi\\
    Q\\
    \end{array}
    \right] \right\| &\ge \left\|\left[
    \begin{array}{c}
    \Pi_M\Psi\\
    \Pi_MQ\\
    \end{array}
    \right] \right\|, \\
    \Pi_M \Psi &=  \Pi_M\Phi-\Pi_M\frac{1}{n}\mathbf{1}\mathbf{1}^\top\Phi.
\end{align}
Therefore, as in Lemma~\ref{lem:ensemble_residual_to_zero}, it holds that $\left\|\Pi_M \frac{1}{n}\mathbf{1}\mathbf{1}^\top\Phi\right\|\rightarrow 0$ as $n\rightarrow\infty$.  This, in turn, means that 
$$\left\|\left[
\begin{array}{c}
\Psi\\
Q\\
\end{array}
\right] \right\|  \ge \left\|\left[
\begin{array}{c}
\Pi_M\Psi\\
\Pi_MQ\\
\end{array}
\right] \right\| \ge \left\|\left[
\begin{array}{c}
\Pi_M\Phi\\
\Pi_MQ\\
\end{array}
\right] \right\|-\varepsilon$$
with $\varepsilon>0$ and $\varepsilon\rightarrow 0$ as $n\rightarrow\infty$ and the same approach outlined in Theorem~\ref{theorem:H_inf_equivalent} follows through. 
\end{proof}

\begin{figure*}[t!]
    \centering
    \begin{subfigure}[b]{\linewidth}
        \centering
        \includegraphics[width=0.9\linewidth]{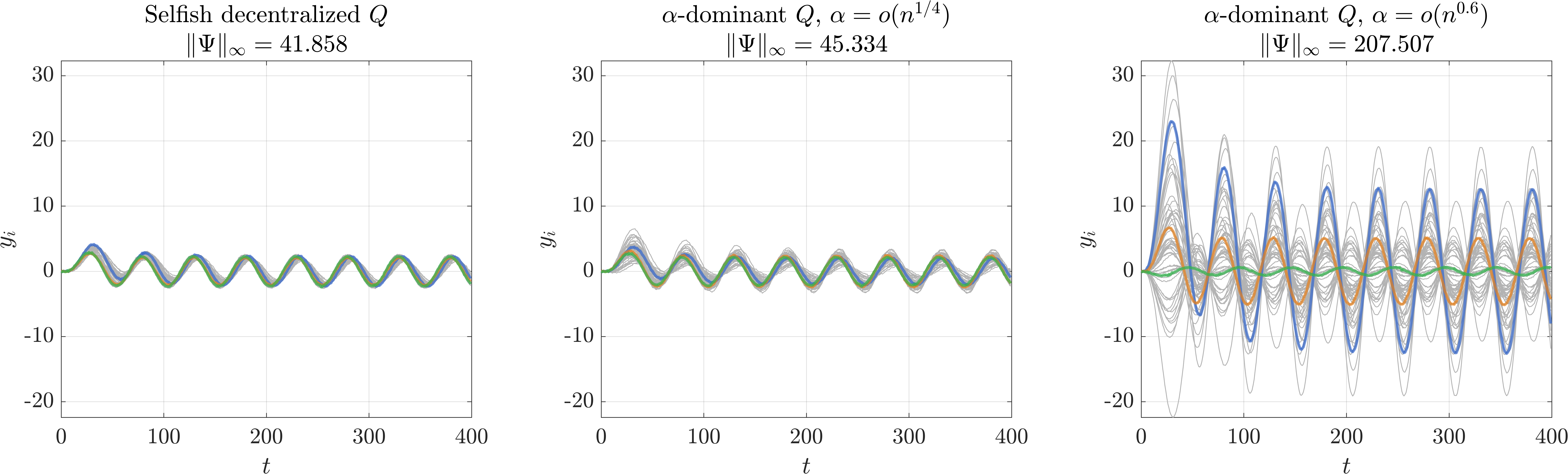}
    \caption{$n =60$}
    \label{subfig:simn60}
    \end{subfigure}
    \par\vspace{0.2cm}
    \begin{subfigure}[b]{\linewidth}
        \centering
        \includegraphics[width=0.9\linewidth]{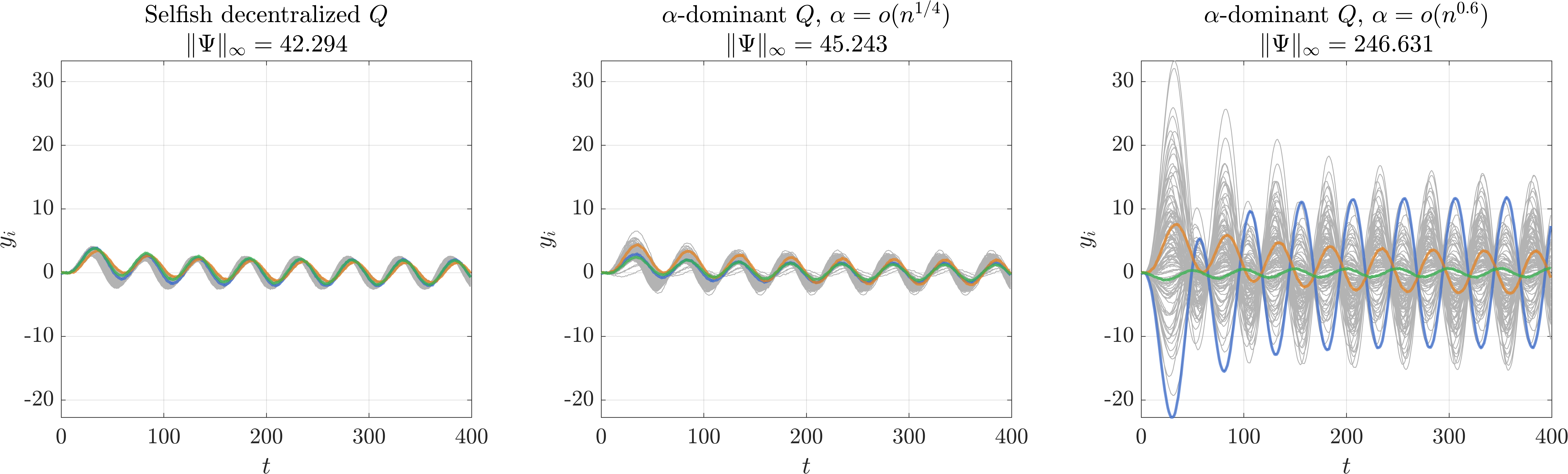}
    \caption{$n =120$}
    \label{subfig:simn120}
    \end{subfigure}
    \caption{Agents' response to noisy inputs with a common sinusoidal signal. Experiment with $n=60,120$ agents. For visualization, the majority of agent responses are depicted in gray, and a randomly selected subset is emphasized in color.}
    \label{fig:sim}
\end{figure*}

\subsection{ ${\cal H}_2$ case}\label{sec:2block_H_2}
For the $\mathcal{H}_2$ case, the results of Theorem~\ref{theorem:H_2_equivalent} remain valid if we maintain a uniform constraint on the $\cal{H}_\infty$ norm $\|Q_{ii}\|$ for all $i$ as in the previous section.  In particular, if 

\begin{equation}\label{eq:2block_H_2}
\psi^o_2:= \inf_{Q \in \mathcal{Q}^\alpha}  \frac{1}{\sqrt{n}}\left\|\left[
\begin{array}{c}
\Psi\\
Q\\
\end{array}
\right] \right\|_2
\end{equation}
it is enough to obtain 
$Q^{o}$ that solves 
\begin{equation}\label{eq:2block-M-2}
\mu^o_2:= \inf_{Q \in \mathcal{Q}^\alpha}  \frac{1}{\sqrt{n}}\left\|\left[
\begin{array}{c}
\Phi\\
Q\\
\end{array}
\right] \right\|_2  
\end{equation}
for sufficiently large $n$, which defines a decentralized controller that delivers performance arbitrarily close to $\psi^o_2,$ i.e.,  

\begin{theorem}\label{theorem:H2_2block}
Let Assumption~\ref{assmp:bound_on_system_matrices} holds and $\alpha = o(\sqrt{n})$, then under the constraint $Q \in \mathcal{Q}^\alpha$ for the $\mathcal{H}_2$ problem~\eqref{eq:2block_H_2} we have 
\begin{align*}
    \lim_{n\rightarrow\infty}(\psi^o_2-\mu^o_2)=0,
\end{align*}
an arbitrarily close to the optimal decentralized controller can be obtained by the solution of the problem~\eqref{eq:2block-M-2} for sufficiently large $n$.
\end{theorem}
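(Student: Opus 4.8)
The plan is to transcribe the one-block $\mathcal{H}_2$ argument of Theorem~\ref{theorem:H_2_equivalent} into the stacked setting, exploiting two facts: the $\mathcal{H}_2$ norm of a vertically stacked map splits additively over its blocks, and the perturbation $\Psi-\Phi=-\frac{1}{n}\mathbf{1}\mathbf{1}^\top\Phi$ lives entirely in the top block. First I would record the identity
\begin{align*}
\left[\begin{array}{c}\Psi\\Q\end{array}\right] = \left[\begin{array}{c}\Phi\\Q\end{array}\right] - \left[\begin{array}{c}\frac{1}{n}\mathbf{1}\mathbf{1}^\top\Phi\\0\end{array}\right],
\end{align*}
and apply the triangle and reverse-triangle inequalities for the $\mathcal{H}_2$ norm. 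Since the lower block of the perturbation is zero, its norm is exactly $\left\|\frac{1}{n}\mathbf{1}\mathbf{1}^\top\Phi\right\|_2$, so for every feasible $Q\in\mathcal{Q}^\alpha$,
\begin{align*}
\left|\frac{1}{\sqrt{n}}\left\|\left[\begin{array}{c}\Phi\\Q\end{array}\right]\right\|_2 - \frac{1}{\sqrt{n}}\left\|\left[\begin{array}{c}\Psi\\Q\end{array}\right]\right\|_2\right| \leq \frac{1}{\sqrt{n}}\left\|\frac{1}{n}\mathbf{1}\mathbf{1}^\top\Phi\right\|_2.
\end{align*}

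Next I would invoke Lemma~\ref{lem:ensemble_h2_residual_to_zero}, with the key observation that the bound established there is uniform over $\mathcal{Q}^\alpha$: it depends only on the structural constants $\gamma_h,\gamma_u,\gamma_v,\gamma_Q$ and on $\alpha=o(\sqrt{n})$, via $\|Q_{ii}\|\le\gamma_Q$ and $\sum_{i\neq j}\|Q_{ij}\|\le\alpha\|Q_{jj}\|$, and not on the particular $Q$. Denoting this common bound by $\delta_n\to 0$, the displayed inequality holds with the same $\delta_n$ on the right for all $Q$, so taking the infimum over $Q\in\mathcal{Q}^\alpha$ of each objective preserves the gap and yields $|\psi^o_2-\mu^o_2|\le\delta_n\to 0$, i.e. $\lim_{n\to\infty}(\psi^o_2-\mu^o_2)=0$.

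For the decentralization claim I would expand the squared objective of~\eqref{eq:2block-M-2} using the diagonal structure of $H,U,V$, so that $\Phi_{ii}=H_i-U_iQ_{ii}V_i$ and $\Phi_{ij}=-U_iQ_{ij}V_j$ for $i\neq j$, giving
\begin{align*}
\frac{1}{n}\sum_{i}\!\left(\|H_i-U_iQ_{ii}V_i\|_2^2+\|Q_{ii}\|_2^2\right)+\frac{1}{n}\sum_{i\neq j}\!\left(\|U_iQ_{ij}V_j\|_2^2+\|Q_{ij}\|_2^2\right).
\end{align*}
Every off-diagonal summand is nonnegative and is driven to zero by $Q_{ij}=0$, a choice that trivially satisfies $\alpha$-column diagonal dominance (its right-hand side becomes zero), while the diagonal blocks then solve the decoupled local problems $\inf_{\|Q_{ii}\|\le\gamma_i}(\|H_i-U_iQ_{ii}V_i\|_2^2+\|Q_{ii}\|_2^2)$. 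Hence the minimizer of~\eqref{eq:2block-M-2} is diagonal, i.e. decentralized, and delivers performance arbitrarily close to $\psi^o_2$ for large $n$.

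The step I expect to require the most care is the passage to the infimum: I must confirm that the residual bound is genuinely independent of $Q$ across all of $\mathcal{Q}^\alpha$, so that a single $\delta_n\to 0$ controls both infima simultaneously. This uniformity is already implicit in the proof of Lemma~\ref{lem:ensemble_h2_residual_to_zero}, which never uses more than the defining constraints of $\mathcal{Q}^\alpha$; once it is made explicit, the remainder is a direct transcription of the one-block $\mathcal{H}_2$ reasoning together with the elementary additivity of the stacked $\mathcal{H}_2$ norm.
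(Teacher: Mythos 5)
Your proposal is correct and follows essentially the same route as the paper: the same decomposition $\left[\begin{smallmatrix}\Psi\\Q\end{smallmatrix}\right]=\left[\begin{smallmatrix}\Phi\\Q\end{smallmatrix}\right]-\left[\begin{smallmatrix}\frac{1}{n}\mathbf{1}\mathbf{1}^\top\Phi\\0\end{smallmatrix}\right]$ followed by an appeal to Lemma~\ref{lem:ensemble_h2_residual_to_zero}. You simply make explicit two points the paper leaves implicit --- the uniformity of the residual bound over $\mathcal{Q}^\alpha$ needed to pass to the infima, and the block-additive expansion showing the minimizer of~\eqref{eq:2block-M-2} is diagonal --- both of which are sound.
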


\begin{proof} Note that 
\begin{align*}
\displaystyle\left[
\begin{array}{c}
\Psi\\
Q\\
\end{array}
\right] =\left[
\begin{array}{c}
\Phi\\
Q\\
\end{array}
\right] +\left[
\begin{array}{c}
-\frac{1}{n}\mathbf{1}\mathbf{1}^\top \Phi\\
0\\
\end{array}
\right]. 
\end{align*}
As $Q \in \mathcal{Q}^\alpha$, from Lemma~\ref{lem:ensemble_h2_residual_to_zero},
\begin{align*}
    \frac{1}{\sqrt{n}} \left\|\left[
\begin{array}{c}
\frac{1}{n}\mathbf{1}\mathbf{1}^\top \Phi\\
0\\
\end{array}
\right] 
\right\|_2
\rightarrow 0 \text{ as }n\rightarrow\infty,
\end{align*} 
which proves the assertion.
\end{proof}

In Theorem \ref{theorem:H2_2block}, the imposed constraint $\left\|Q_{ii}\right\|\le\gamma_i$ for all $i$ (as $Q \in \mathcal{Q}^\alpha$) for the  ${\cal H}_2$ problem, where $\gamma_i$ can be arbitrarily large. This is to ensure closed-loop stability for the ``infinite" ensemble, i.e., as $n\rightarrow\infty.$. This comes for free in the ${\cal H}_\infty$ problem in Theorem \ref{theorem:H_inf_2block} as for the solution $Q^o$ of~\eqref{eq:2block-M} we have that $\left\|Q^{o}_{ii}\right\|\le\gamma_h$ for all $i$ and any $n$.

\section{Case Study}
Here, we design a simulation case study to verify our theoretical results. We consider heterogeneous agents with each agent $i$ described by the following discrete-time dynamics:
\begin{align*}
x_{i_1}(k+1) &= x_{i_1}(k) + x_{i_2}(k), \\
x_{i_2}(k+1) &= a_i x_{i_2}(k) + w_i(k) + b_i u_i(k), \\
y_i(k) &= -x_{i_1}(k) + v_i(k),
\end{align*}
where the parameters $a_i \in [0.5,\,1.5]$ and $b_i \in [0.8,\,1.2]$ are drawn independently from uniform distributions, ensuring heterogeneity between agents. The disturbance $v_i(k)$ is modeled as a sinusoidal signal, while the measurement noise $w_i(k)$ is a Gaussian random variable with mean zero and a standard deviation $0.05$. The global reference trajectory is given by a sinusoidal signal. 
We simulate systems with $n = 60$ and $n = 120$ agents. For each agent, we compute the selfish decentralized controller that minimizes the $\mathcal{H}_\infty$ from $[w_i \ v_i]^\top$ to $[z_i \ \xi_i ]^\top$. Here, the cost focuses on minimizing the deviations from the average of all agents (measured via $z_i$) while keeping the control input ``small''. We compare the decentralized solution with the controller designed while considering the $\alpha$-diagonal dominance constraints for $\alpha = o(n^{1/4})$ (satisfying the conditions in Theorem~\ref{theorem:H_inf_equivalent}) and $\alpha = o(n^{0.6})$ (violating the conditions in Theorem~\ref{theorem:H_inf_equivalent}). Unlike the decentralized controllers, which are obtained by solving each agent’s control problem independently, the $\alpha$-dominant controllers in our simulations are not solutions to any optimization problem. Instead, they are generated by modifying the selfish diagonal solution: for each agent $i$, the diagonal entry $Q_{ii}$ is retained, but part of the norm $\|Q_{ii}\|$ is redistributed across the off-diagonal terms of the same column to enforce (or deliberately violate) the $\alpha$-column diagonal dominance condition. Concretely, we scale $\|Q_{ii}\|$ by the prescribed $\alpha$ and randomly allocate some non-zero norm to the off-diagonal entries such that the resulting $Q$ satisfies the $\alpha$-dominant property. We create a \textit{compliant} case by choosing $\alpha = o(n^{1/4})$, and a \textit{violation} case with $\alpha = o(n^{0.6})$, where the constraint is intentionally broken. These $\alpha$-dominant controllers act as test cases for assessing the sharpness of the theoretical bounds, rather than as new optimal designs.

The $\mathcal{H}_\infty$ norms for the decentralized selfish solution with $n = 60, 120$ agents are $41.858$ and $42.294$ respectively. While the corresponding controller designed under the $\alpha$-diagonal dominance constraints has $\mathcal{H}_\infty$ norms of $45.334$ and $45.243$, respectively for $\alpha = o(n^{1/4})$, and $207.507$ and $246.631$, respectively for $\alpha = o(n^{0.6})$. Fig.~\ref{fig:sim} illustrates the trajectories of the system outputs 
$y_i$ for all agents over a horizon of $400$ discrete-time steps. The decentralized closed-loop responses exhibit a natural tendency to ``reasonably'' follow the sinusoidal input, while achieving a near-optimal norm with respect to deviations from the collective average. In contrast, the controller designed under the $\alpha$-diagonal dominance constraints is, by construction, insensitive to averages. Consequently, its ability to track the common sinusoidal input is weaker. Furthermore, we observe that as the number of agents $n$ increases, the decentralized $\mathcal{H}_\infty$ norm closely tracks the optimal $\mathcal{H}_\infty$ norm achieved via the $\alpha$-diagonally dominant design with $\alpha = o(n^{1/4})$. However, the violation of the dominance condition, when $\alpha =o(n^{0.6})$, leads to a significant difference between the optimal $\mathcal{H}_\infty$ norm and the decentralized solution, suggesting that the conditions in Theorems~\ref{theorem:H_inf_equivalent} and~\ref{theorem:H_2_equivalent} are tight. 

Table~\ref{tab:hinf_results} reports the $\mathcal{H}_\infty$ norm values for varying numbers of agents. The results show that the selfish decentralized controller achieves nearly constant performance across all $n$, the \textit{compliant} $\alpha$-dominant case ($ \alpha = o(\sqrt{n})$ remains close to this baseline decentralized controller, while the \textit{violation} case $ \alpha = o(n^{0.6})$ steadily diverges away from the decentralized cost. These trends reinforce the sharpness of our theoretical guarantees.

\begin{table}[h]
    \centering
    \caption{$\mathcal{H}_\infty$ norm of the closed-loop with different controllers for an increasing number of agents $n$.}
    \label{tab:hinf_results}
    \begin{tabular}{c|c|c|c}
        \hline
        $n$ & Selfish diagonal $Q$ & $\alpha = o(n^{1/4})$ & $\alpha = o(n^{0.6})$ \\
        \hline
        30 & 41.710 & 49.338 & 205.602 \\
        60 & 41.858 & 45.334 & 207.507 \\
        120 & 42.294 & 45.243 & 246.631 \\
        200 & 42.318 & 44.389 & 256.725 \\
        300 & 42.323 & 43.794 & 272.788 \\ 
        400 & 42.326 & 43.486 & 293.791 \\ 
        600 & 42.333 & 43.351 & 319.620 \\
        \hline
    \end{tabular}
\end{table}

\section{Concluding Remarks}
We developed an input–output framework for the design and analysis of decentralized controllers in large-scale multi-agent systems with heterogeneous agents coupled through a mean-field type social cost. We extended the focus of networked control approaches to a setting where each agent’s local input–output map is bounded and the collective dynamics satisfy a diagonal dominance property. In this case, we proved that selfish behavior is asymptotically socially optimal as $n \to \infty$ in the case of $\mathcal{H}_\infty$ and $\mathcal{H}_2$ norms if the column diagonal dominance of the collective dynamics is $o({\sqrt{n}})$.    

\bibliographystyle{IEEEtran}
\bibliography{References}

% Generated by IEEEtran.bst, version: 1.14 (2015/08/26)
\begin{thebibliography}{10}
\providecommand{\url}[1]{#1}
\csname url@samestyle\endcsname
\providecommand{\newblock}{\relax}
\providecommand{\bibinfo}[2]{#2}
\providecommand{\BIBentrySTDinterwordspacing}{\spaceskip=0pt\relax}
\providecommand{\BIBentryALTinterwordstretchfactor}{4}
\providecommand{\BIBentryALTinterwordspacing}{\spaceskip=\fontdimen2\font plus
\BIBentryALTinterwordstretchfactor\fontdimen3\font minus
  \fontdimen4\font\relax}
\providecommand{\BIBforeignlanguage}[2]{{%
\expandafter\ifx\csname l@#1\endcsname\relax
\typeout{** WARNING: IEEEtran.bst: No hyphenation pattern has been}%
\typeout{** loaded for the language `#1'. Using the pattern for}%
\typeout{** the default language instead.}%
\else
\language=\csname l@#1\endcsname
\fi
#2}}
\providecommand{\BIBdecl}{\relax}
\BIBdecl

\bibitem{liu2011controllability}
Y.-Y. Liu, J.-J. Slotine, and A.-L. Barab{\'a}si, ``Controllability of complex
  networks,'' \emph{nature}, vol. 473, no. 7346, pp. 167--173, 2011.

\bibitem{rahmani2009controllability}
A.~Rahmani, M.~Ji, M.~Mesbahi, and M.~Egerstedt, ``Controllability of
  multi-agent systems from a graph-theoretic perspective,'' \emph{SIAM Journal
  on Control and Optimization}, vol.~48, no.~1, pp. 162--186, 2009.

\bibitem{pasqualetti2014controllability}
F.~Pasqualetti, S.~Zampieri, and F.~Bullo, ``Controllability metrics,
  limitations and algorithms for complex networks,'' \emph{IEEE Transactions on
  Control of Network Systems}, vol.~1, no.~1, pp. 40--52, 2014.

\bibitem{bamieh2012coherence}
B.~Bamieh, M.~R. Jovanovic, P.~Mitra, and S.~Patterson, ``Coherence in
  large-scale networks: Dimension-dependent limitations of local feedback,''
  \emph{IEEE Transactions on Automatic Control}, vol.~57, no.~9, pp.
  2235--2249, 2012.

\bibitem{siami2013fundamental}
M.~Siami and N.~Motee, ``Fundamental limits on robustness measures in networks
  of interconnected systems,'' in \emph{52nd IEEE Conference on Decision and
  Control}.\hskip 1em plus 0.5em minus 0.4em\relax IEEE, 2013, pp. 67--72.

\bibitem{bamieh2002distributed}
B.~Bamieh, F.~Paganini, and M.~A. Dahleh, ``Distributed control of spatially
  invariant systems,'' \emph{IEEE Transactions on Automatic Control}, vol.~47,
  no.~7, pp. 1091--1107, 2002.

\bibitem{bamieh2005convex}
B.~Bamieh and P.~G. Voulgaris, ``A convex characterization of distributed
  control problems in spatially invariant systems with communication
  constraints,'' \emph{Systems \& control letters}, vol.~54, no.~6, pp.
  575--583, 2005.

\bibitem{vamsi_elia_tac}
A.~S.~M. Vamsi and N.~Elia, ``Optimal distributed controllers realizable over
  arbitrary networks,'' \emph{IEEE Transactions on Automatic Control}, vol.~61,
  no.~1, pp. 129--144, 2016.

\bibitem{lall_rotkowitz}
M.~Rotkowitz and S.~Lall, ``A characterization of convex problems in
  decentralized control,'' \emph{IEEE Transactions on Automatic Control},
  vol.~51, no.~2, pp. 1984--1996, 2006.

\bibitem{caines_lqg0}
M.~Huang, P.~E. Caines, and R.~P. Malhame, ``Large-population cost-coupled lqg
  problems with nonuniform agents: Individual-mass behavior and decentralized
  nash equilibria,'' \emph{IEEE Transactions on Automatic Control}, vol.~52,
  no.~9, 2007.

\bibitem{huang_lqg}
------, ``Social optima in mean field lqg control: Centralized and
  decentralized strategies,'' \emph{IEEE Transactions on Automatic Control},
  vol.~57, no.~7, pp. 1736--1751, 2012.

\bibitem{emilio}
X.~Wang, N.~Xiao, L.~Xie, and E.~Frazzoli, ``Discrete-time mean field games in
  multi-agent systems,'' in \emph{13th International Conference on Control
  Automation Robotics and Vision (ICARCV)}, 2014, pp. 711--716.

\bibitem{caines_lqg}
M.~Huang, P.~E. Caines, and R.~P. Malhame, ``Optimality in mean field lqg
  control: Centralized and decentralized strategies,'' \emph{IEEE Transactions
  on Automatic Control}, vol.~57, no.~7, pp. 1736--1751, 2012.

\bibitem{Lions}
J.-M. Lasry and P.-L. Lions, ``Mean field games,'' \emph{Japanese Journal of
  Mathematics}, vol.~2, no.~1, pp. 229--260, 2007.

\bibitem{nourian}
M.~Nourian, P.~E. Caines, R.~P. Malhame, and M.~Huang, ``A solution to the
  consensus problem via stochastic mean field control,'' in \emph{Proceedings
  of the 2nd IFAC Workshop on Distributed Estimation and Control in Networked
  Systems}, Annecy, France, 2010.

\bibitem{basar}
J.~Moon and T.~Basar, ``Linear-quadratic risk-sensitive mean field games,'' in
  \emph{53rd IEEE Conference on Decision and Control (CDC)}, Los Angeles, CA,
  USA, 2014.

\bibitem{astolfi}
D.~Bauso, T.~Mylvaganam, and A.~Astolfi, ``Crowd-averse robust mean-field
  games: Approximation via state space extension,'' \emph{IEEE Transactions on
  Automatic Control}, vol.~61, pp. 1882--1894, 2016.

\bibitem{carmona}
R.~Carmona, F.~Delarue, and A.~Lachapelle, ``Control of mckean-vlasov dynamics
  versus mean field games,'' \emph{Mathematics and Financial Economics},
  vol.~7, no.~2, pp. 131--166, 2013.

\bibitem{voulgaris}
P.~G. Voulgaris, ``A convex characterization of classes of problems in control
  with specific interaction and communication structures,'' in
  \emph{Proceedings of the American Control Conference}, 2001, pp. 3128--3133.

\bibitem{qisalvoukha04}
X.~Qi, M.~V. Salapaka, P.~G. Voulgaris, and M.~Khammash, ``Structured optimal
  control with multiple objectives: A convex solution,'' \emph{IEEE
  Transactions on Automatic Control}, vol.~49, no.~10, pp. 1623--1640, 2004.

\bibitem{cedric_andrea}
C.~Langbort, R.~S. Chandra, and R.~D'Andrea, ``Distributed control design for
  systems interconnected over an arbitrary graph,'' \emph{IEEE Transactions on
  Automatic Control}, vol.~49, no.~9, pp. 1502--1519, 2004.

\bibitem{voueli17journal}
P.~G. Voulgaris and N.~Elia, ``Social optimization problems with decentralized
  and selfish optimal strategies,'' in \emph{Proceedings of the IEEE Conference
  on Decision and Control (CDC)}, 2017.

\bibitem{ACC18}
------, ``A unified input-output approach for networked control problems with
  decentralized and selfish optimality,'' in \emph{Proceedings of the American
  Control Conference (ACC)}, 2018.

\bibitem{IFAC17}
------, ``Collective optimization problems with optimal decentralized selfish
  strategies,'' in \emph{Proceedings of IFAC Conference}, 2017.

\bibitem{tac21}
------, ``When selfish is socially optimal,'' \emph{IEEE Transactions on
  Automatic Control}, 2021.

\bibitem{voulgaris2025decentralized}
------, ``Decentralized and selfish optimality in mean field problems with
  nonuniform agents,'' \emph{IEEE Transactions on Automatic Control}, 2025.

\bibitem{dorfler2013synchronization}
F.~D{\"o}rfler, M.~Chertkov, and F.~Bullo, ``Synchronization in complex
  oscillator networks and smart grids,'' \emph{Proceedings of the National
  Academy of Sciences}, vol. 110, no.~6, pp. 2005--2010, 2013.

\bibitem{simpson2013synchronization}
J.~W. Simpson-Porco, F.~D{\"o}rfler, and F.~Bullo, ``Synchronization and power
  sharing for droop-controlled inverters in islanded microgrids,''
  \emph{Automatica}, vol.~49, no.~9, pp. 2603--2611, 2013.

\bibitem{seiler2004disturbance}
P.~Seiler, A.~Pant, and K.~Hedrick, ``Disturbance propagation in vehicle
  strings,'' \emph{IEEE Transactions on Automatic Control}, vol.~49, no.~10,
  pp. 1835--1842, 2004.

\bibitem{jadbabaie2003coordination}
A.~Jadbabaie, J.~Lin, and A.~S. Morse, ``Coordination of groups of mobile
  autonomous agents using nearest neighbor rules,'' \emph{IEEE Transactions on
  Automatic Control}, vol.~48, no.~6, pp. 988--1001, 2003.

\bibitem{ADMM_tac}
V.~Khatana and M.~V. Salapaka, ``{DC-DistADMM}: {ADMM} algorithm for
  constrained optimization over directed graphs,'' \emph{IEEE Transactions on
  Automatic Control}, vol.~68, no.~9, pp. 5365--5380, 2023.

\bibitem{basar1999dynamic}
T.~Ba{\c{s}}ar and G.~J. Olsder, \emph{Dynamic noncooperative game
  theory}.\hskip 1em plus 0.5em minus 0.4em\relax SIAM, 1999, vol.~23.

\bibitem{huang2006large}
M.~Huang, P.~E. Caines, and R.~P. Malham{\'e}, ``Large population stochastic
  dynamic games: closed-loop mckean-vlasov systems and the nash certainty
  equivalence principle,'' \emph{Communications in Information \& Systems},
  vol.~6, no.~3, pp. 221--252, 2006.

\bibitem{youla1976modern}
D.~Youla, H.~Jabr, and J.~Bongiorno, ``Modern wiener-hopf design of optimal
  controllers--part ii: The multivariable case,'' \emph{IEEE Transactions on
  Automatic Control}, vol.~21, no.~3, pp. 319--338, 1976.

\bibitem{dahleh1994control}
M.~A. Dahleh and I.~J. Diaz-Bobillo, \emph{Control of uncertain systems: a
  linear programming approach}.\hskip 1em plus 0.5em minus 0.4em\relax
  Prentice-Hall, Inc., 1994.

\end{thebibliography}

\end{document}